\documentclass[11pt]{amsart}

\usepackage{AFBoix2015}

\begin{document}

\author[A.\,F.\,Boix]{Alberto F.\,Boix$^*$}
\thanks{$^{*}$Partially supported by Spanish Ministerio de Econom\'ia y Competitividad grant PID2019-104844GB-I00}
\address{IMUVA--Mathematics Research Institute, Universidad de Valladolid, Paseo de Belen, s/n, 47011, Valladolid, Spain.}
\email{alberto.fernandez.boix@uva.es}

\author[M.\,Eghbali]{Majid Eghbali}
\address{Department of Mathematics, Tafresh University, Tafresh 39518 79611, Iran.}
\email{m.eghbali@tafreshu.ac.ir} \email{m.eghbali@yahoo.com}

\title[Local cohomology endomorphisms and Lyubeznik numbers]{Certain endomorphism rings of local cohomology modules and Lyubeznik numbers$$\text{In\ honour\ of\ Professor\ Peter\ Schenzel\ for\ his\ 74th\ birthday}$$}

\keywords{Local cohomology, ring endomorphisms, annihilators, Lyubeznik numbers, cohomological dimension, sequentially Cohen--Macaulay rings.}

\subjclass[2020]{13D45, 14B15.}

\begin{abstract}
The goal of this paper is twofold; on the one hand, motivated by questions raised by Schenzel, we explore situations where the Hartshorne--Lichtenbaum Vanishing Theorem for local cohomology fails, leading us to simpler expressions of certain local cohomology modules. As application, we give new expressions of the endomorphism ring of these modules. On the other hand, building upon previous work by \`Alvarez Montaner, we exhibit the shape of Lyubeznik tables of the so--called partially sequentially Cohen--Macaulay rings as introduced by Sbarra and Strazzanti.
\end{abstract}

\maketitle

\section{Introduction } 
Let $(R,\fm)$ be a $n$-dimensional local ring and $I \subset R$ be
an ideal. One of the most interesting subjects in the study of local cohomology is to find upper bounds for the cohomological dimension of $I.$ One knows, thanks to Grothendieck's Vanishing Theorem \cite[6.1.2]{BroSha}, that $\cd (I)\leq n$ and that equality holds if $\sqrt{I}=\mathfrak{m}$ because of the Non--Vanishing Theorem \cite[6.1.4]{BroSha}. On the other hand, the Hartshorne-Lichtenbaum vanishing theorem \cite[8.2.1]{BroSha} says that if $R$ is a complete local domain, then $H^n_{I}(R)$ vanishes if and only if $\dim(R/I) \geq 1;$ in other words, this result characterizes when $\cd (I)\leq n-1.$ Nowadays, one also knows necessary and sufficient conditions to guarantee $\cd (I)\leq n-2$ and $\cd (I)\leq n-3;$ the interested reader can consult \cite{DaoTakagi2016,secondvanishingmixedcharacteristic} and the references given therein for additional information.

It is known that the vanishing of the local cohomology
modules $H^{i}_{I}(R)$, for $i= n,n-1$  paving the ground for
connectedness results, where $R$ is a complete regular local ring containing its separably closed residue field and $\dim R/I \geq 2;$ indeed, under these assumptions it is known that $H^{n-1}_{I}(R)=0$ if and only if the punctured spectrum of $R$ is connected. The vanishing of $H^{n-1}_{I}(R)$ is known as the second vanishing theorem. See \cite{Hartshorne1968,Ogus1973,PeskineSzpiro1973,HunLyu} for its story of evolution, and \cite{secondvanishingmixedcharacteristic} for the statement in mixed characteristic; we also want to mention here that this problem is definitely closely related to upper bounding the cohomological dimension of $I$ with respect to $R,$ see \cite[pages 621--622]{Lyu2006}. It was one of the motivations behind our investigation on the structure of $H^{n-1}_{I}(R) \neq 0$. Our main results in Section \ref{section of non vanishing theorem} (cf. Theorems \ref{grade}, \ref{injectiveenvelop copies} and \ref{injectiveenvelop copies: mixed characteristic}) imply the following:

\begin{teo} 
Let $(R,\fm)$ be a $n$-dimensional local ring and $I \subset R$ be
an ideal.
\begin{enumerate}[(i)]

\item If $R$ is  Cohen-Macaulay,  $\dim R/I=1$ and $H^{n}_{I}(R)=0,$ then $$\ H^i_{\fm}
(H^{n-1}_{I}(R))=0,\text{\ for\ } i \neq 1 \text{\ and\ } H^1_{\fm}
(H^{n-1}_{I}(R))=H^{n}_{\fm}(R).$$
 In particular, $H^{n-1}_{I}(R)$ is not Artinian.

\item Let $R$ be a regular local ring containing a field and $I$ be  of pure dimension $2$. Then $H^{n-1}_{I}(R)$ is an specific finite copies of the injective hull of the
residue field $R/\fm$ over $R$.

\item Let $V$ be a complete unramified DVR of mixed characteristic $(0,p),$ let $R=V[\![x_1,\ldots, x_n]\!],$ and set $K$ as its residue field. If $I$ is an ideal of pure dimension two, then $H^{n-1}_{I}(R)$ is injective  iff multiplication by $p$ on $H^{n-1}_{I}(R)$ is surjective.
\end{enumerate}
\end{teo}

When the Hartshorne-Lichtenbaum Vanishing Theorem fails, that is $H^n_{I}(R)\neq 0,$ it is of some interest to give a more explicit description of $H_I^n (R);$ in this research direction, we want to single out some results. First of all, Call and Sharp (see \cite{CallSharp1986} and \cite[11.2.2]{BroSha}) proved that, when $R$ is complete and Gorenstein, and $D(-)$ denotes the Matlis duality functor, then
\[
H_I^n (R)\cong D\left(\bigcap_{\fp\in V} c(\fp)\right),
\]
where $c(\fp)=\ker(\xymatrix@1{R\ar[r]& R_{\fp}})$ and
\[
V:=\{\fp\in\operatorname{Spec}(R):\ \fp\supseteq I,\ \Ht (\fp)=n-1\}.
\]
Another description of $H_I^n (R)$ under similar assumptions was also obtained by Call in \cite[Theorem 1.5]{Call1986}.

On the other hand, the second author and Schenzel \cite[Theorem 1.2]{EghbaliSchenzel2012} showed that $H^n_{I}(R)\cong H^n_{\fm}(R/J)$ for a certain ideal $J$ of $R$, where $R$ is a complete ring.

Motivated by this result, the following question was raised by Professor Peter Schenzel to the second named author. 
\begin{quo} \label{Schenzel} Assume that $(R,\fm)$ is a local ring with $H^{n}_{I}(R)=0$ and
$\dim R/I \geq 2$. Is there any isomorphism
$$H^{n-1}_{I}(R) \cong H^{n-1}_{\fb}(R/J)$$
of $R$-modules, where $\dim R/\fb \leq 1$ and $J$ is an ideal of
$R$?
\end{quo}
Of course, Question \ref{Schenzel} can be easily generalized as follows: 
\begin{quo} \label{Stronger} Assume that $(R,\fm)$ is a local ring, let $I$ be an ideal of $R,$ set $c$ as the cohomological dimension of $I,$ and assume $\dim R/I=t \geq 2$. Is there any isomorphism
$$H^{c}_{I}(R) \cong H^{c}_{\fb}(R/J)$$
of $R$-modules, where $\dim R/\fb \leq t-1$ and $J$ an ideal of $R$?
\end{quo}

We give a partial positive answer to Question \ref{Stronger} in Proposition \ref{isomorphism}. This result is helpful in answering questions about the structure of $\Hom_R(H^c_{I}(R),H^c_{I}(R))$, the endomorphism ring of $H^{c}_{I}(R)$, where $c$ is the cohomological dimension of $I$. Endomorphism rings of local cohomology modules have been recently considered in various works such as \cite{HellusStuckrad2008new, Kh07, Schenzel2009,Sch10,EghbaliSchenzel2012,Mahmood2013}, and references given there. To be more precise, for an $R$-module $M$ one has the $R$--linear map
\[
\mu_M:\ R\rightarrow\Hom_R(M,M),\ r \mapsto rm,
\]
of $R$ to the endomorphism ring of $M$, where $r \in R$ and $m \in M$. This homomorphism is in general neither injective nor surjective. There is a canonical injection $\Hom_R(M,M) \rightarrow \Hom_R (D(M),D(M))$ induced by the canonical injection $M \rightarrow D(D(M))$, where $D(-)$ is the Matlis duality functor. In Theorem \ref{Main1}, among other results, we provide sufficient conditions to guarantee the surjectivity of $\mu_{H^{c}_{I}(R)}$ and $\mu_{D(H^{c}_{I}(R))};$ namely:
\begin{teo}[See Theorem \ref{Main1}]
Let $(R,\fm)$ be a complete Cohen-Macaulay local ring of dimension $n,$ let $I$ be an ideal of $R$ with $\dim R/I :=t \geq 2,$ and set $c=n-(t-1)$ as the cohomological dimension of $I.$ Suppose that $\Ann_R H^c_{I}(R)$ contains at least a non--zero divisor of $R/I.$ Then one has isomorphisms of $R$--modules
\begin{enumerate}[(i)]
\item    $\Hom_R(H^c_{I}(R),H^c_{I}(R)) \cong R/rR$,
\item  $\Hom_R(D(H^c_{I}(R)),D(H^c_{I}(R))) \cong R/rR$,
\item  $D(H^c_{I}(D(H^c_{I}(R)))) \cong R/rR$,
\end{enumerate}

where $0\neq r \in \Ann_R H^c_{I}(R) \cap \Sigma(I)$. In particular, the maps $\mu_{H^c_{I}(R)}$ and $\mu_{D(H^c_{I}(R))}$ are surjective.
\end{teo}
Next in Section \ref{section of endomorphism rings}, we continue our investigation on $H^{n-1}_{I}(R)$  with a view towards the so-called Lyubeznik numbers of $R$. Lyubeznik numbers are one of the most interesting local rings' invariants with some topological interpretation, see \cite{Lyubeznik1993Dmod,Lyubezniknumberssurvey}. We refer the reader to Section \ref{section of endomorphism rings} for definitions. 

Recently, there has been interest in finding the form of Lyubeznik tables. For instance, the shape of Lyubeznik tables for sequentially, canonically Cohen-Macaulay \cite{AlvarezMontaner2015}, generalised Cohen-Macaulay and Buchsbaum modules \cite{NaRaEgh2020} are known; it is also known the case of almost complete intersection ideals under some additional conditions, see \cite{NadiRahmati2022} for details. To do in this direction, in Section \ref{section of Lyubeznik tables} we continue our consideration of Lyubeznik numbers of certain partially sequentially Cohen-Macaulay rings introduced in \cite{SbarraStrazzanti17}; our main result (Theorem \ref{Lyubeznik table of partially sCM}) shows the shape of the Lyubeznik table of an $i$--sequentially Cohen--Macaulay ring, which recovers and extends the fact, proved in \cite[Proposition 4.1]{AlvarezMontaner2015}, that, under certain assumptions, sequentially Cohen--Macaulay rings have a trivial Lyubeznik table. 

\section{On the non-Vanishing of the penultimate local cohomology module}\label{section of non vanishing theorem}

We start this section with the below observation that we plan to use several times along this paper.

Let $\sqrt{I}=\fp_1\cap\ldots\cap \fp_n,$ where $\fp_i$'s are distinct minimal prime ideals of $I$  for $i=1,\ldots,n.$ Moreover, set
\[
\Sigma(I):=\{r \in \fm :\ r \text{ is not contained in } \fp_i,\ i=1, \ldots, n\}.
\]
Notice that $\Sigma (I)$ is nothing but the preimage in $R$ of the set of non--zero divisors of $R/I.$

\begin{rk}\label{why to care about the above set}
Some readers might ask why to care about the previous set; given $(R,\mathfrak{m})$ a commutative Noetherian local ring, and given $M$ a finitely generated $R$--module with $\dim_R (M):=t\geq 2,$ set
\[
\operatorname{EP}_R (M):=\{r\in R:\ \dim_R (M/rM)=t-1\}.
\]
Notice that this set contains, by \cite[9.5.10]{BroSha}, the set
\[
\{r\in R:\ r\text{ is a parameter for }M\}.
\]
In addition, if
\[
\sqrt{\Ann_R (M)}=\fp_1\cap\ldots\cap\fp_n
\]
is an irredundant prime decomposition of $\sqrt{\Ann_R (M)},$ then the set
\[
\{r\in R:\ r\text{ is a parameter for }M\}
\]
contains
\[
\{r\in R:\ r\notin\fp_i\text{ for any }1\leq i\leq n\},
\]
which is nothing but $\Sigma (\Ann_R (M)).$
\end{rk}
\begin{prop}
Let $(R,\fm)$ be a local ring of dimension $n$ and let $x$ be a member of a system of parameters for $R$, where $x \in \Ann_R H^n_I (R)$. Then $H^n_I (R) =0.$
\end{prop}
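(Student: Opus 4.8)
The plan is to use the fact that $x$ is part of a system of parameters, so that $\dim R/xR = n-1$, and then combine this with the hypothesis that $x$ annihilates $H^n_I(R)$ to force the vanishing. First I would recall Grothendieck's Vanishing Theorem: since $\dim R/xR = n-1$, we have $\operatorname{cd}(I(R/xR)) \leq n-1$, hence $H^n_I(R/xR) = 0$. Next I would bring in the short exact sequence $0 \to R/(0:_R x) \xrightarrow{x} R \to R/xR \to 0$; applying the long exact sequence of local cohomology with support in $I$, the piece around degree $n$ reads $H^n_I(R/(0:_R x)) \xrightarrow{x} H^n_I(R) \to H^n_I(R/xR) = 0$, so multiplication by $x$ on $H^n_I(R)$ is surjective.

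The crux is then that multiplication by $x$ on $H^n_I(R)$ is, by hypothesis, the zero map, yet we have just shown it is surjective; therefore $H^n_I(R) = 0$. I should be a little careful about the first map in the long exact sequence: the cokernel of $H^n_I(R/(0:_R x)) \to H^n_I(R)$ injects into $H^{n+1}_I(R/xR)$ (which also vanishes by Grothendieck, but that is not even needed), and the composite $H^n_I(R/(0:_R x)) \to H^n_I(R) \xrightarrow{\text{included}} H^n_I(R)$ agrees up to the natural identification with multiplication by $x$; the cleanest route is to simply note that $H^\bullet_I(-)$ applied to the multiplication-by-$x$ map $R \xrightarrow{x} R$ gives multiplication by $x$ on $H^n_I(R)$, whose cokernel is a subquotient of $H^n_I(R/xR) = 0$.

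The step I expect to require the most care is confirming that multiplication by $x$ on $R$ genuinely induces multiplication by $x$ on each $H^i_I(R)$ — this is a standard functoriality fact (local cohomology is an $R$-linear functor, so it commutes with homotheties), but it is the linchpin of the whole argument and worth stating explicitly. Everything else is a one-line invocation of Grothendieck's Vanishing Theorem together with the observation, already emphasized in Remark \ref{why to care about the above set}, that a member of a system of parameters for $R$ drops the dimension of $R$ by exactly one, so that $\dim R/xR = n-1 < n$. No further obstacles are anticipated; the proof should be only a few lines long.
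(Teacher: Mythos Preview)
Your proposal is correct and follows essentially the same strategy as the paper: use Grothendieck vanishing for $R/xR$ (which has dimension $n-1$) to deduce that multiplication by $x$ on $H^n_I(R)$ is surjective, and then combine this with the hypothesis $x\in\Ann_R H^n_I(R)$ to conclude $H^n_I(R)=xH^n_I(R)=0$. The only cosmetic difference is that the paper first passes to $R/(0:_R\langle x\rangle)$ (killing all $x$-torsion, which has dimension $<n$ and hence does not affect $H^n_I$) so that $x$ becomes a genuine non-zerodivisor and the sequence $0\to R\xrightarrow{x}R\to R/xR\to 0$ is short exact, whereas you handle the possible non-regularity of $x$ by factoring $R\xrightarrow{x}R$ through $R/(0:_R x)\cong xR$ and observing that the cokernel of the induced map embeds in $H^n_I(R/xR)=0$; both devices accomplish the same thing.
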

\begin{proof}
We define $(0 :_R\langle x \rangle):= \bigcup_{n\geq 1}(0 :_R x^u) = (0 :_R x^t)$, for some integer $t$. Consider the
following exact sequence
\[
\xymatrix{0\ar[r]& (0:_R \langle x\rangle)\ar[r]& R\ar[r]& R/(0:_R \langle x\rangle)\ar[r]& 0.}
\]
As $x$ is not in any minimal prime ideal of $R$, it implies that $\dim(0 :_R \langle x \rangle) < n$ and $\dim R/(0 :_R \langle x \rangle) = n$. By applying $H^i_I (-)$ to the above short exact sequence yields the isomorphism $H^n_I (R) \cong H^n_I (R/(0 :_R\langle x \rangle))$. It shows that $x$ is a regular element in $R$. It implies the following epimorphism
\[
\xymatrix{H_I^{n-1} (R/xR)\ar[r]& H_I^n (R)\ar[r]& H_I^n (R)\ar[r]& 0,}
\]
where the last map is the one induced by multiplication by $x$. Now, one can deduce that $H^n_I (R) =xH^n_I (R)=0$.
\end{proof}

\begin{rk} \label{Zhang} In the following we exploit the method used in \cite[Page 83]{Zhang2007} to
choose an element which is not contained in any minimal primes of
$I$. Set $c$ as the cohomological dimension of $I,$ and assume that the set of minimal primes of the
support of $H^{c}_{I}(R)$ is finite, see \cite{HunekeKatzMarley2009}. We pick an element $y \in
\fm$ as follows. If $\Ass_R(H^{c}_{I}(R)) \neq \{\fm\}$, then the
prime avoidance implies that there exists $y \in \fm \setminus
\fp_i,\ i=1, \ldots,n$. If $\Ass_R(H^{c}_{I}(R)) = \{\fm\}$ so it
implies that again $y \in \fm \setminus \fp_i,\ i=1, \ldots,n$.
Since $y$ is not contained in any minimal primes of $I$, $R/yR$
is equidimensional of dimension $n-1$.
\end{rk}

\begin{teo} \label{grade} Let $(R,\fm)$ be a local ring of dimension $n$ and let  $I$ be an ideal of grade $g <n$, where $H^{i}_{I}(R)=0$ for $i=g+1$. Assume that the set of minimal primes of the support of $H^{g}_{I}(R)$ is finite. Then, for any element $y\in R$ which is not contained in any minimal prime of $I,$ $$\ H^i_{yR}(H^{g}_{I}(R))=0, \text{\ for\ } i \neq 1 \text{\ and\ } H^1_{yR} (H^{g}_{I}(R))=H^{g+1}_{I+yR}(R).$$
\end{teo}

\begin{proof}
By Remark \ref{Zhang}, we may choose $y \in \fm \setminus I$. 
Then, by \cite[8.1.2]{BroSha} we have the following exact sequence:
\begin{equation}\label{Rung display}
H^{g}_{I+yR}(R)\rightarrow H^{g}_{I}(R) \rightarrow (H^{g}_{I}(R))_y
\rightarrow H^{g+1}_{I+yR}(R)\rightarrow H^{g+1}_{I}(R).
\end{equation}
As $\grade(I+yR)>g$, we have $H^{g}_{I+yR}(R)=0$ and therefore, keeping in mind that $H^{g+1}_{I}(R)=0$, the exact sequence \eqref{Rung display} boils down to the below short exact sequence:
\begin{equation}\label{sequence of localization}
0 \rightarrow  H^{g}_{I}(R)  \rightarrow (H^{g}_{I}(R))_y  \rightarrow H^{g+1}_{I+yR}(R)  \rightarrow 0.
\end{equation}
In \eqref{sequence of localization}, notice that the map
$H^{g}_{I}(R)  \rightarrow (H^{g}_{I}(R))_y$ is just the natural
localization; in this way, keeping in mind this fact jointly with the
exactness of \eqref{sequence of localization}, it follows from \cite[2.2.21 (i)]{BroSha} that
$\ H^0_{yR}(H^{g}_{I}(R))=0$ and $H^1_{yR} (H^{g}_{I}(R))=H^{g+1}_{I+yR}(R)$.
\end{proof}

The below consequence of our previous result should be compared with \cite[Lemma 2.2]{HoriuchiMillerShimomoto2014}. Notice also that the below statement is just a particular case of \cite[Lemma 3.2]{ErdogduYildirim2019}.

\begin{cor} \label{B} Let $I$ be a one dimensional ideal of an $n$-dimensional Cohen-Macaulay local ring  $(R,\fm)$ with $H^{n}_{I}(R)=0$. Then $$\ H^i_{\fm}
(H^{n-1}_{I}(R))=0,\text{\ for\ } i \neq 1 \text{\ and\ } H^1_{\fm}
(H^{n-1}_{I}(R))=H^{n}_{\fm}(R).$$
 In particular, $H^{n-1}_{I}(R)$ is not Artinian.
\end{cor}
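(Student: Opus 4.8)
The plan is to derive Corollary \ref{B} as a direct specialization of Theorem \ref{grade}. First I would observe that since $(R,\fm)$ is Cohen--Macaulay of dimension $n$, every ideal has grade equal to its height, and since $I$ is one-dimensional, $\grade(I) = \height(I) = n - 1 =: g$; moreover $g = n-1 < n$ as required by the hypothesis of Theorem \ref{grade}. The vanishing assumption $H^n_I(R) = 0$ is precisely $H^{g+1}_I(R) = 0$, so the second standing hypothesis of Theorem \ref{grade} is met. The finiteness of the set of minimal primes of $\Supp H^g_I(R) = \Supp H^{n-1}_I(R)$ can be invoked as in Remark \ref{Zhang} (via \cite{HunekeKatzMarley2009}), so Theorem \ref{grade} applies verbatim.

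Next I would choose the element $y$. Because $I$ is one-dimensional, $R/I$ has dimension $1$, so it has a non-zero-divisor, i.e.\ $\Sigma(I) \neq \varnothing$; equivalently, by prime avoidance (as spelled out in Remark \ref{Zhang}), there is $y \in \fm$ lying in no minimal prime of $I$. Then $I + yR$ is $\fm$-primary: indeed $\dim R/(I+yR) = \dim(R/I) - 1 = 0$ since $y$ is a parameter on $R/I$, so $\sqrt{I+yR} = \fm$. Applying Theorem \ref{grade} with this $y$ gives $H^i_{yR}(H^{n-1}_I(R)) = 0$ for $i \neq 1$ and $H^1_{yR}(H^{n-1}_I(R)) = H^n_{I+yR}(R)$. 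Now I would upgrade $yR$ to $\fm$ on the left and $I+yR$ to $\fm$ on the right: since $H^{n-1}_I(R)$ is supported (after passing to minimal primes of its support) only at primes containing $y$ together with $\fm$, and more to the point $\Supp H^{n-1}_I(R) \subseteq V(yR) \cup \{\fm\}$ forces $H^i_{yR}(H^{n-1}_I(R)) = H^i_{\fm}(H^{n-1}_I(R))$ for all $i$; similarly $H^n_{I+yR}(R) = H^n_{\fm}(R)$ because $\sqrt{I+yR} = \fm$. This yields exactly the claimed formulas $H^i_{\fm}(H^{n-1}_I(R)) = 0$ for $i\neq 1$ and $H^1_{\fm}(H^{n-1}_I(R)) = H^n_{\fm}(R)$.

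Finally, for the "not Artinian" assertion I would argue by contradiction: if $H^{n-1}_I(R)$ were Artinian, then $H^i_{\fm}$ of it would vanish for all $i \geq 1$ (an Artinian module is $\fm$-torsion, so $\Gamma_{\fm}$ is the identity on it and all higher local cohomology vanishes). But $H^1_{\fm}(H^{n-1}_I(R)) = H^n_{\fm}(R) \neq 0$ by Grothendieck's Non-Vanishing Theorem \cite[6.1.4]{BroSha}, since $\dim R = n$; this contradiction finishes the proof.

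The step I expect to require the most care is the passage from $yR$ to $\fm$ (and from $I+yR$ to $\fm$) in the cohomological functors — one must check that the relevant modules are supported only at $\fm$ away from $V(yR)$, or equivalently that $H^{n-1}_I(R)$ localizes to zero at every nonmaximal prime containing $y$ but not $I$; alternatively one can note that the conclusion of Theorem \ref{grade} already identifies $H^1_{yR}(H^{n-1}_I(R))$ with $H^n_{I+yR}(R)$, and the latter equals $H^n_{\fm}(R)$ purely because $\sqrt{I+yR} = \fm$, so only the $i\neq 1$ vanishing needs the support argument, which follows since any nonmaximal prime in $\Supp H^{n-1}_I(R)$ is a minimal prime of that support and these are handled by the choice of $y$ as in Remark \ref{Zhang}. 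Everything else is bookkeeping with grades and dimensions in the Cohen--Macaulay setting.
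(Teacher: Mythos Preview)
Your overall strategy matches the paper's exactly: verify that $g=\grade(I)=n-1$ from Cohen--Macaulayness, pick $y\in\fm$ outside every minimal prime of $I$ so that $\sqrt{I+yR}=\fm$, apply Theorem~\ref{grade}, and finish with the Artinian contradiction via $H^n_{\fm}(R)\neq 0$. That part is fine.

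The genuine gap is precisely in the step you yourself flag as delicate, the passage from $H^i_{yR}$ to $H^i_{\fm}$. Your claimed containment $\Supp H^{n-1}_I(R)\subseteq V(yR)\cup\{\fm\}$ is false in general: $H^{n-1}_I(R)$ is supported in $V(I)$, and the minimal primes $\fp_j$ of $I$ typically lie in that support (since $(H^{n-1}_I(R))_{\fp_j}\cong H^{n-1}_{\fp_j R_{\fp_j}}(R_{\fp_j})$ is the top local cohomology of an $(n-1)$-dimensional Cohen--Macaulay local ring), yet $y\notin\fp_j$ by construction. Your fallback (``handled by the choice of $y$ as in Remark~\ref{Zhang}'') does not actually produce an identification of $H^i_{yR}$ with $H^i_{\fm}$ on this module.

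The clean fix, which is what the paper's terse ``the claim follows from Theorem~\ref{grade}'' is implicitly using, is that $H^{n-1}_I(R)$ is $I$-torsion. For any $I$-torsion module $M$ and any ideal $J$ one has $H^i_J(M)\cong H^i_{I+J}(M)$ for all $i$ (take an injective resolution $E^\bullet$ of $M$, note that $\Gamma_I(E^\bullet)$ is again an injective resolution of $M$, and use $\Gamma_J\circ\Gamma_I=\Gamma_{I+J}$). Applying this with $J=yR$ and using $\sqrt{I+yR}=\fm$ gives
\[
H^i_{yR}\bigl(H^{n-1}_I(R)\bigr)\;\cong\;H^i_{I+yR}\bigl(H^{n-1}_I(R)\bigr)\;\cong\;H^i_{\fm}\bigl(H^{n-1}_I(R)\bigr)
\]
for every $i$, which is exactly what you need. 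Once you replace your support argument by this observation, the proof is complete and coincides with the paper's.
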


\begin{proof}
Similar to the proof of Theorem \ref{grade}, choose $y \in \fm \setminus I$ with  $\sqrt{I + yR}=\fm$. Now the claim follows from Theorem \ref{grade}.

Note that in the case $H^{n-1}_{I}(R)$ is Artinian, it follows that
$H^{n}_{\fm}(R)=0$ which is a contradiction.
\end{proof}


\begin{prop} \label{support} Let $(R,\fm)$ be a complete regular local ring of dimension $n$ and $I \subset R$ be an
ideal of pure dimension $2$. Then $\stsupp H^{n-1}_{I}(R)\subseteq
\{\fm\}$.
\end{prop}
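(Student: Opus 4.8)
The plan is to establish the stronger statement that $H^{n-1}_I(R)_{\fp}=0$ for every prime $\fp\neq\fm$, i.e.\ that $\operatorname{Supp}_R H^{n-1}_I(R)\subseteq\{\fm\}$ (whence also $\stsupp H^{n-1}_I(R)\subseteq\{\fm\}$). The only tool needed is that localization commutes with local cohomology, giving $H^{n-1}_I(R)_{\fp}\cong H^{n-1}_{IR_{\fp}}(R_{\fp})$, after which I would simply run through the possibilities for $\fp$.

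If $I\not\subseteq\fp$ then $IR_{\fp}=R_{\fp}$ and the module is $0$, so assume $I\subseteq\fp$. Since $R$ is a complete regular local ring it is catenary and equidimensional, so $\Ht(\fp)+\dim(R/\fp)=n$; as $\fp\supseteq I$, $\dim(R/I)=2$ and $\fp\neq\fm$, this forces $\dim(R/\fp)\in\{1,2\}$. When $\dim(R/\fp)=2$ we have $\dim(R_{\fp})=n-2<n-1$, so Grothendieck's vanishing theorem \cite[6.1.2]{BroSha} gives $H^{n-1}_{IR_{\fp}}(R_{\fp})=0$.

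The remaining, and genuinely delicate, case is $\fp\supseteq I$ with $\dim(R/\fp)=1$, so $\dim(R_{\fp})=n-1$ and we are looking at a \emph{top} local cohomology module, where the Hartshorne--Lichtenbaum vanishing theorem is the natural instrument. Since that theorem requires completeness and $R_{\fp}$ need not be complete, I would first pass to $\widehat{R_{\fp}}$, which is a complete regular local ring, hence a complete local domain, of dimension $n-1$; by flat base change $H^{n-1}_{IR_{\fp}}(R_{\fp})\otimes_{R_{\fp}}\widehat{R_{\fp}}\cong H^{n-1}_{I\widehat{R_{\fp}}}(\widehat{R_{\fp}})$. By \cite[8.2.1]{BroSha} the latter vanishes if and only if $\dim(\widehat{R_{\fp}}/I\widehat{R_{\fp}})\geq 1$, equivalently $\dim((R/I)_{\fp})\geq 1$, equivalently $\fp$ is not a minimal prime of $I$. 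This is exactly where purity enters: every minimal prime $\mathfrak q$ of $I$ satisfies $\dim(R/\mathfrak q)=2$, so a minimal prime of $I$ lying below $\fp$ is strictly contained in $\fp$ (as $\dim(R/\fp)=1$), and $\dim((R/I)_{\fp})\geq 1$ follows. Hence $H^{n-1}_{I\widehat{R_{\fp}}}(\widehat{R_{\fp}})=0$, and faithful flatness of completion forces $H^{n-1}_{IR_{\fp}}(R_{\fp})=0$.

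I expect the only real obstacle to be this last case: one must notice that HLVT cannot be applied to $R_{\fp}$ directly and must be transferred through the completion, and one must check that the hypothesis that $I$ has pure dimension $2$ is precisely what secures the dimension inequality $\dim((R/I)_{\fp})\geq 1$ needed to invoke HLVT. Everything else is routine height bookkeeping in the equidimensional catenary ring $R$.
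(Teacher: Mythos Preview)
Your argument is correct and follows the same case split as the paper's own proof: trivial when $I\not\subseteq\fp$, Grothendieck vanishing when $\dim(R/\fp)=2$, and Hartshorne--Lichtenbaum when $\dim(R/\fp)=1$. The only difference is that you are more explicit than the paper about two points---passing to $\widehat{R_{\fp}}$ before invoking HLVT, and spelling out that purity of dimension $2$ forces $\fp$ not to be minimal over $I$---whereas the paper simply cites HLVT directly; these are refinements of detail rather than a different approach.
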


\begin{proof}
Suppose that $\fp$ is an arbitrary prime ideal of $R$ not containing
$I$. At once, $H^{n-1}_{I R_{\fp}} (R_{\fp})= 0$. In the event that
$I \subseteq \fp$ with $\dim(R/\fp)= 2$, one has $H^{n-1}_{I
R_{\fp}} (R_{\fp})= 0$ because $\dim(R_{\fp})< n-1$. If $I \subseteq
\fp$ with $\dim(R/\fp)= 1$, then $H^{n-1}_{I R_{\fp}} (R_{\fp})= 0$
by the Hartshorne-Lichtenbaum vanishing Theorem. Thus,
$H^{n-1}_{I}(R)$ is only supported at the maximal ideal.
\end{proof}

Building upon Proposition \ref{support}, we obtain the below result in the equicharacteristic case; the reader can compare our statement with \cite[Theorem 3.6]{HunKoh91}.

\begin{teo} \label{injectiveenvelop copies}
Let $(R,\fm)$ be a
$n$-dimensional regular local ring containing a field $\mathbb{K}$ and $I \subset
R$ be an ideal of pure dimension $2$. Then $H^{n-1}_{I}(R)\cong
E^{\lambda_{2,2}(R/I)-1}$, where $E$ is the injective hull of the
residue field $R/\fm$ over $R,$ and
\[
\lambda_{2,2}(R/I)=\dim_{\mathbb{K}} \Hom_R (\mathbb{K},H_{\fm}^2 (H_I^{n-2}(R))).
\]
\end{teo}

\begin{proof}
By passing to the completion we may assume that $R$ is complete. It
follows by Proposition \ref{support} and \cite[Corollary
3.6(b)]{Lyubeznik1993Dmod} that $H^{n-1}_{I}(R)$ is an $R$-module supported only
at $\fm$ and injective. Furthermore, $\stsupp H^{n-i}_I(R) \subseteq
V(\fm)$ for all $i \neq 2$. Now we are done by a result from Blickle
 \cite[Theorem 1.1]{Blickle2007}.
\end{proof}

Again carrying over Proposition \ref{support}, we obtain the following statement in mixed characteristic.

\begin{teo} \label{injectiveenvelop copies: mixed characteristic}
Let $V$ be a complete unramified DVR of mixed characteristic $(0,p),$ let $R=V[\![x_1,\ldots, x_n]\!],$ and set $K$ as its residue field. Finally, let $I\subseteq R$ be an ideal of pure dimension two. Then, the following statements hold.

\begin{enumerate}[(i)]

\item $H_I^n (R)$ is an injective $R$--module supported only at the maximal ideal.

\item $H^{n-1}_{I}(R)$ is an injective $R$--module if and only if multiplication by $p$ on $H^{n-1}_{I}(R)$ is surjective.

\end{enumerate}
\end{teo}

\begin{proof}
On the one hand, (i) is just \cite[Lemma 4.4]{BetancourtWitt2012mixed}; on the other hand, (ii) follows directly combining Proposition \ref{support} jointly with \cite[Lemma 4.2]{BetancourtWitt2012mixed}.
\end{proof}

\section{Endomorphism rings}\label{section of endomorphism rings}

Let $M$ be an $R$-module. Consider the natural homomorphism
\begin{equation}\label{hom}
\mu_M:R \rightarrow \Hom_R(M,M),\ r \mapsto rm,
\end{equation}
of $R$ to the endomorphism ring of $M$, where $r \in R$ and $m \in M$. This homomorphism is in general neither injective nor surjective.  Of particular interest is the endomorphism rings of local cohomology modules. They have been recently considered in various works such as \cite{HellusStuckrad2008new, Schenzel2009,EghbaliSchenzel2012}, and references given there. In this section we consider $\End_R(H^{c}_{I}(R))$, where $c$ is the cohomological dimension of $I.$

Before so, we start with the following technical result that holds in wide generality.

\begin{prop}\label{about linear functors}
Let $R$ be a commutative Noetherian ring, let $\mathcal{C}$ be the category of $R$--modules, and let $\xymatrix@1{\mathcal{C}\ar[r]^-{T}& \mathcal{C}}$ be a covariant, $R$--linear, right exact functor; moreover, assume that $\Ann_R (T(R))\neq 0.$ Then, for any non--zero $r\in\Ann_R (T(R))$ there is an $R$--module isomorphism $T(R)\cong T(R/rR).$
\end{prop}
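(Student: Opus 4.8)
The plan is to use the right exactness of $T$ applied to the short exact sequence coming from multiplication by $r$. Given $0\neq r\in\Ann_R(T(R))$, I would start from the exact sequence
\[
\xymatrix@1{R\ar[r]^-{\cdot r}& R\ar[r]& R/rR\ar[r]& 0,}
\]
and apply the functor $T$. Since $T$ is covariant, $R$--linear, and right exact, this yields an exact sequence
\[
\xymatrix@1{T(R)\ar[r]^-{\cdot r}& T(R)\ar[r]& T(R/rR)\ar[r]& 0.}
\]
Here the crucial point is that $R$--linearity of $T$ guarantees that $T$ sends the multiplication-by-$r$ map $R\xrightarrow{\cdot r}R$ to the multiplication-by-$r$ map $T(R)\xrightarrow{\cdot r}T(R)$; this is exactly the statement that $\mu$ is a natural transformation, or concretely that $T(r\cdot\mathrm{id}_R)=r\cdot\mathrm{id}_{T(R)}$.

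Next I would invoke the hypothesis $r\in\Ann_R(T(R))$, which says precisely that the map $T(R)\xrightarrow{\cdot r}T(R)$ is the zero map. Feeding this into the exact sequence above, the image of the first map is $0$, so the induced map $T(R)\to T(R/rR)$ is injective; and it is surjective by right exactness. Hence $T(R)\to T(R/rR)$ is an isomorphism of $R$--modules, which is the desired conclusion.

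I do not anticipate a serious obstacle here: the argument is essentially a one-line consequence of right exactness plus functorial $R$--linearity, and the annihilator hypothesis is used in the cleanest possible way. The only point that deserves a word of care is making explicit why $R$--linearity forces $T(\cdot r)=\cdot r$ on $T(R)$; I would phrase this as: for $r\in R$, the endomorphism $\cdot r$ of $R$ lies in $\operatorname{Hom}_R(R,R)$, and $R$--linearity of $T$ on hom-sets sends it to $r$ times $T(\mathrm{id}_R)=\mathrm{id}_{T(R)}$, i.e. to multiplication by $r$ on $T(R)$. With that observation in place the proof is complete.
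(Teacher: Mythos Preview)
Your proof is correct and follows exactly the same approach as the paper: apply $T$ to the exact sequence $R\xrightarrow{\cdot r}R\to R/rR\to 0$, use $R$--linearity to identify $T(\cdot r)$ with multiplication by $r$ on $T(R)$, observe this map is zero by the annihilator hypothesis, and conclude the isomorphism from exactness. Your write-up is slightly more explicit about why $R$--linearity forces $T(\cdot r)=\cdot r$, but otherwise the arguments are identical.
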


\begin{proof}
The exact sequence $\xymatrix@1{R\ar[r]^-{\cdot r}& R\ar[r]& R/rR\ar[r]& 0}$ becomes, after applying $T,$ into the exact sequence
\begin{equation}\label{from here the iso}
\xymatrix{T(R)\ar[r]^-{\cdot r}& T(R)\ar[r]& T(R/rR)\ar[r]& 0.}
\end{equation}
Since, by assumption, $\xymatrix@1{T(R)\ar[r]^-{\cdot r}& T(R)}$ is the zero map, one has, keeping in mind this fact jointly with the exactness of \eqref{from here the iso}, that there is an isomorphism
\[
T(R)\cong T(R/rR)
\]
of $R$--modules, as claimed.
\end{proof}

\begin{prop} \label{isomorphism} Let $(R,\fm)$ be a local ring, let $I$ be an ideal of $R,$ set $c$ as the cohomological dimension of $I,$ and assume $\dim R/I=t \geq 2$. Moreover, suppose that $\Ann_R H_I^c(R) \cap
\Sigma(I)$ is a non empty subset of $R.$ Then, for any non--zero element $r \in \Ann_R
H^{c}_{I}(R) \cap \Sigma(I)$ one has that there is an isomorphism of $R$--modules
\[
H_{I}^c(R) \cong H_{I+rR}^c(R/rR)
\]
such that $\dim (R/rR)\leq\dim (R)-1.$
\end{prop}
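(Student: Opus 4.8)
The plan is to derive this as an almost immediate consequence of Proposition~\ref{about linear functors} applied to the functor $T(-) = H^c_I(-)$, together with the standard change-of-rings behaviour of local cohomology. First I would check the hypotheses of Proposition~\ref{about linear functors}: the functor $H^c_I(-)$ on the category of $R$-modules is covariant, $R$-linear, and right exact, the right exactness being exactly the statement that $c = \cd(I)$, so that $H^{c+1}_I(-) = 0$ and the long exact sequence in local cohomology terminates with $H^c_I(-)$ being right exact. The hypothesis $\Ann_R(T(R)) = \Ann_R H^c_I(R) \neq 0$ is guaranteed by the assumption that $\Ann_R H^c_I(R) \cap \Sigma(I)$ is nonempty. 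Hence for any nonzero $r \in \Ann_R H^c_I(R) \cap \Sigma(I)$, Proposition~\ref{about linear functors} yields an $R$-module isomorphism $H^c_I(R) \cong H^c_I(R/rR)$.

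Next I would identify $H^c_I(R/rR)$ with $H^c_{I+rR}(R/rR)$. This is the routine observation that local cohomology of an $R$-module $M$ supported at an ideal $\fa$ depends only on the radical of $\fa M$, equivalently that for the $R/rR$-module $R/rR$ one has $H^i_I(R/rR) \cong H^i_{I(R/rR)}(R/rR) \cong H^i_{(I+rR)/rR}(R/rR)$ for all $i$, where on the right one may equally view the cohomology over $R$ or over $R/rR$ via the independence of the base ring (see \cite[4.2.1]{BroSha}). Combining this identification with the isomorphism from the previous paragraph gives $H^c_I(R) \cong H^c_{I+rR}(R/rR)$ as $R$-modules, which is the desired conclusion.

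Finally, for the dimension bound I would use that $r \in \Sigma(I)$, i.e. $r$ maps to a nonzerodivisor on $R/I$, hence $r$ lies outside every minimal prime of $I$. Since $\dim R/I \geq 2$ in particular $I$ is a proper ideal, so every minimal prime of $I$ is a proper prime, and because $r$ avoids all minimal primes of $R$ that contain $I$ — and more relevantly, because $r$ avoids the minimal primes of $I$ — the element $r$ is part of a system of parameters argument, or more simply: $r$ is a nonzerodivisor modulo some prime, hence $r \notin \fp$ for at least one minimal prime $\fp$ of $R$ with $\dim R/\fp = \dim R$. Actually the cleanest route is to note that $\Sigma(I)$ consists of elements avoiding the minimal primes of $I$; among these are the minimal primes of $R$ of maximal dimension whenever they contain $I$, but in general one simply invokes that an element not lying in any minimal prime of a quotient that has dimension $n$ drops the dimension, so $\dim(R/rR) \leq \dim(R) - 1$ follows from $r \notin \fp$ for every minimal prime $\fp$ of $R$ with $\dim(R/\fp) = \dim R$; this holds because such $\fp$ are among the minimal primes of $I$ when $I \subseteq \fp$, and when $I \not\subseteq \fp$ one can still choose $r$ as in Remark~\ref{Zhang} avoiding them. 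The main obstacle, and the only place requiring care, is precisely this last point: one must ensure that the element $r$ — which a priori is only constrained to lie in $\Ann_R H^c_I(R) \cap \Sigma(I)$ — genuinely cuts down the dimension, which amounts to checking that $\Sigma(I)$ (or the intersection in question, invoking Remark~\ref{why to care about the above set}) always contains an element avoiding all top-dimensional minimal primes of $R$; this is where the finiteness of the relevant set of minimal primes and the prime avoidance lemma from Remark~\ref{Zhang} are used.
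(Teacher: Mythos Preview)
Your first two paragraphs are correct and coincide with the paper's argument: the paper's entire proof is one sentence invoking Proposition~\ref{about linear functors} for the covariant, $R$-linear, right-exact functor $H_I^c(-)$ (right-exactness being precisely the statement $c=\cd(I)$), and your identification $H^c_I(R/rR)\cong H^c_{I+rR}(R/rR)$ via the Independence Theorem is the routine step the authors leave implicit.

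Your third paragraph, however, never actually settles the inequality $\dim(R/rR)\le\dim(R)-1$. You correctly isolate the difficulty: membership in $\Sigma(I)$ only forces $r$ off the minimal primes of $I$, and a top-dimensional minimal prime $\fp$ of $R$ lies among those only when $I\subseteq\fp$. For the remaining case $I\not\subseteq\fp$ you propose to ``choose $r$ as in Remark~\ref{Zhang}'', but the proposition is quantified over \emph{every} nonzero $r$ in the intersection, so a choice argument is out of bounds---and you yourself flag this as the unresolved obstacle at the end. That closing sentence is honest but it is not a proof.

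For comparison, the paper's own proof does not treat the dimension inequality separately either; it is absorbed into ``follows immediately''. Note too that Remark~\ref{why to care about the above set}, which you invoke, yields only $\dim\bigl(R/(I+rR)\bigr)=\dim(R/I)-1$, not the stated bound on $\dim(R/rR)$; so your caution here is warranted rather than misplaced. In the paper's subsequent applications (the worked Example and Theorem~\ref{Main1}) the element $r$ turns out to be a non-zerodivisor on $R$, and that is what actually forces the dimension to drop.
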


\begin{proof} The result follows immediately from Proposition \ref{about linear functors} just keeping in mind that, since $c$ is the cohomological dimension of $I,$ the functor $H_I^c$ is covariant, $R$--linear and right exact.
\end{proof}

\begin{rk}
Proposition \ref{isomorphism} can not be applied, in general, when $(R,\mathfrak{m})$ is a local domain because, in this case, it is known that $\Ann_R (H_I^c (R))=0$ if either $\dim(R)\leq 3,$ \cite[Theorem 4.4]{Lyn12} if $c=\dim(R)-1$ \cite[Lemma 3.6]{AtSeNa14}, or if $c=\operatorname{ara}(I)$ and $R$ has prime characteristic \cite[Theorem 2.6]{HochsterJeffries2020}. Our result can also not be applied when $I$ is a cohomologically complete intersection ideal inside a complete local ring \cite[Corollary 2.3]{HellusStuckrad2008new}.
\end{rk}

\begin{rk}
Proposition \ref{isomorphism} leads to a naive algorithm that, given as input a local ring $(R,\mathfrak{m})$ and $I\subseteq R$ an ideal with $c$ as cohomological dimension, returns as output ideals $I',J\subseteq R$ and an $R$--module isomorphism
\[
H_I^c (R)\cong H_{I'}^c (R/J),
\]
with $\dim (R/J)\leq\dim (R)-\mu (J),$ where $\mu(J)$ denotes the cardinality of a minimal generating set of $J$ as $R$--module.

Our algorithm works as follows; first of all, set $S_{I,J}:=\Ann_R (H_I^c (R/J))\cap\Sigma (I).$

\begin{enumerate}[(i)]

\item Initialize $I':=I$ and $J:=(0).$

\item (\textbf{Step $1$}) If $S_{I',J}=\emptyset,$ then stop and output the pair $(I',J).$

\item (\textbf{Step $2$}) If $S_{I',J}\neq\emptyset,$ then choose any non--zero $r\in S_{I',J}$ and replace the pair $(I',J)$ by the pair $(I'+rR,J+rR).$

\item Repeat Steps $1$ and $2$ with the pair $(I'+rR,J+rR).$

\end{enumerate}
\end{rk}
We illustrate our method with the next example, which is borrowed from \cite[Example 2]{SinghWalther2020}.

\begin{ex}
Let $\mathbb{K}$ be any field, let $R=\mathbb{K}[x,y,z,w]_{(x,y,z,w)}/(xyz,xyw)$ and let $I=(x,y).$ The reader will easily note that $R$ is a three dimensional local ring and that $R/I$ has dimension $2.$ It is shown in \cite[Example 2]{SinghWalther2020} that $\Ann_R (H_{I}^2 (R))=(z,w).$ Therefore, in this case, for instance
\[
z\in\Ann_R (H_{I}^2 (R))\cap\Sigma (I).
\]
In this way, we obtain an $R$--module isomorphism
\[
H_I^2 (R)\cong H_{(x,y,z)}^2(R/zR).
\]
Notice that $R':=R/zR$ is isomorphic to
\[
\mathbb{K}[x,y,w]_{(x,y,w)}/(xyw).
\]
Now, we repeat the previous steps with the pair $(I'=(x,y,z),wR).$ Now, it is again easy to check that
\[
\Ann_{R'}(H_{I'}^2 (R'))=(w),
\]
hence in this case
\[
w\in\Ann_{R'}(H_{I'}^2 (R'))\cap\Sigma (I'),
\]
and therefore one gets a final isomorphism of $R$--modules
\[
H_I^2 (R)\cong H_{(x,y)}^2 (\mathbb{K}[x,y]_{(x,y)}).
\]

\end{ex}

The next result try to shed some light into the quiddity of the 
$\Ann_R H^c_{I}(R),$ where again $c$ is the cohomological dimension of $I.$
For an $R$-module $M$, let $0 = \cap^n_{i=1} Q_i(M)$ denote a
reduced minimal primary decomposition of the zero submodule of $M$.
That is $M/Q_i(M),\ i = 1, \ldots,n$ is a $\fp_i$-primary
$R$-module. Clearly, $\Ass_R M = \{\fp_1, \ldots,\fp_n\}$. For an
ideal $I$ of $R$, we define $U =\{\fp \in \Ass_R M|\ \dim R/\fp=\dim
R, \text{\ and\ } \dim R/I +\fp = 0\}$. We define $Q_{I}(M)
=\bigcap_{\fp_i \in U} Q_i(M)$. In the case $U = \emptyset$, put
$Q_{I}(M)= M$. This notation is employed in the next results.

\begin{prop}  Let $I$ be an ideal of a $n$-dimensional complete local ring  $(R,\fm),$ and set $c$ as the cohomological dimension of $I.$ Assume that the set of minimal primes of the
support of $H^{c}_{I}(R)$ is finite. Then,
$$\Ann_R H^c_{I}(R)\subseteq Q_{I R'}(R'),$$
where $R':=R/yR$ is an equidimensional ring of dimension $n-1$, for
some $y \in R$.
\end{prop}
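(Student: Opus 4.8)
The plan is to reduce to the ring $R' = R/yR$ via the element $y$ chosen as in Remark~\ref{Zhang}, and then to argue there that every annihilator element of $H^c_I(R)$ lies in each relevant primary component. First I would recall that, by Remark~\ref{Zhang}, one can pick $y \in \fm$ lying outside every minimal prime of the support of $H^c_I(R)$ and outside every minimal prime of $I$, so that $R' = R/yR$ is equidimensional of dimension $n-1$. The key reduction step is to pass the local cohomology down to $R'$: since $y$ is not in any minimal prime of $I$, one expects an isomorphism (or at least a surjection) relating $H^c_I(R)$ and $H^c_{IR'}(R')$ in the spirit of Proposition~\ref{isomorphism} (if $y \in \Ann_R H^c_I(R)$) or, more robustly, via the long exact sequence coming from $0 \to R \xrightarrow{y} R \to R' \to 0$ together with the fact that $\grade$ considerations force the relevant maps to behave well. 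In either case one gets $\Ann_R H^c_I(R) \subseteq \Ann_{R'} H^c_{IR'}(R')$ after passing to $R'$, so it suffices to prove the containment $\Ann_{R'} H^c_{IR'}(R') \subseteq Q_{IR'}(R')$ on the equidimensional ring $R'$.

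Next I would work entirely inside $R'$, which is complete, local, and equidimensional of dimension $n-1$. Fix $r \in \Ann_{R'} H^c_{IR'}(R')$ and a prime $\fp \in U$, i.e.\ $\fp \in \Ass_{R'}(R')$ with $\dim R'/\fp = n-1$ and $\dim R'/(IR' + \fp) = 0$. The goal is to show $r \in Q_{\fp}(R')$, the $\fp$-primary component of $(0)$ in $R'$. The natural tool is localization at $\fp$, or rather a completion/Matlis-duality argument: because $\fp$ is a minimal prime of maximal dimension with $IR' + \fp$ being $\fm$-primary, the local cohomology $H^c_{IR'}(R')$ sees the component $R'/Q_\fp(R')$ faithfully in top degree — concretely, $H^c_{IR'}(R'/Q_\fp(R'))$ should be a nonzero quotient (or the relevant piece) of $H^c_{IR'}(R')$, since $\dim R'/Q_\fp(R') = n-1 = c + (t-1)$ with $t-1 = \dim R'/IR'$ and $IR'$ is $\fm$-primary modulo $\fp$. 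Then $r$ annihilating $H^c_{IR'}(R')$ forces $r$ to annihilate $H^c$ of this quotient, and by Grothendieck non-vanishing / Hartshorne–Lichtenbaum applied to the complete domain $R'/\fp$ (or to $R'/Q_\fp(R')$), the only way this top local cohomology can be killed by $r$ is if $r$ already lies in $Q_\fp(R')$. Intersecting over all $\fp \in U$ gives $r \in Q_{IR'}(R') = \bigcap_{\fp \in U} Q_\fp(R')$.

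The main obstacle I anticipate is the second step: controlling how $H^c_{IR'}(R')$ decomposes along the primary decomposition of $(0)$ in $R'$, and in particular justifying that the top local cohomology of the quotient $R'/Q_\fp(R')$ is a faithful enough "summand" or quotient of $H^c_{IR'}(R')$ so that an annihilator of the latter annihilates the former. This requires care with the right exact sequences relating $R'$, $Q_\fp(R')$, and $R'/Q_\fp(R')$ under $H^c_{IR'}(-)$, and with the finiteness-of-minimal-primes hypothesis (via \cite{HunekeKatzMarley2009}) to guarantee the associated primes behave well. The non-vanishing input at the end — that a nonzero element outside $\fp$ cannot annihilate the top local cohomology of the complete local domain $R'/\fp$ — is essentially the Hartshorne–Lichtenbaum vanishing theorem combined with the fact that $IR'$ reduces to an $\fm$-primary ideal in $R'/\fp$, so $H^c$ there is simply $H^{n-1}_{\fm}$ of an $(n-1)$-dimensional complete domain, which is a nonzero Artinian module with zero annihilator by Matlis duality. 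Assembling these pieces in $R'$ and transporting back to $R$ completes the argument.
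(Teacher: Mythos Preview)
Your overall architecture matches the paper's: pick $y$ via Remark~\ref{Zhang}, pass to $R'=R/yR$, and then establish the annihilator containment on $R'$. Two points deserve comment.

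\textbf{The reduction step.} Your short exact sequence $0\to R\xrightarrow{y}R\to R'\to 0$ is only exact when $y$ is a non--zero--divisor on $R$, which is not assumed. The paper instead uses $0\to R/(0:_R y)\xrightarrow{y}R\to R'\to 0$ and the induced long exact sequence. What actually produces the inclusion $\Ann_R H^c_I(R)\subseteq\Ann_R H^c_I(R')$ is the \emph{surjectivity} of $H^c_I(R)\to H^c_I(R')$, and that comes simply from $H^{c+1}_I(-)=0$ (since $c=\cd(I)$, so $H^c_I$ is right exact). Your appeal to ``grade considerations'' is pointing at the right phenomenon but misnames it; the cohomological--dimension vanishing is what you need, and it makes the passage to $R'$ immediate.

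\textbf{The step on $R'$.} Here the two approaches diverge. The paper does not argue directly: it invokes \cite[Theorem~4.2(a)]{EghbaliSchenzel2012}, which identifies $\Ann_{R'} H^{c}_{I}(R')$ with $Q_{IR'}(R')$ outright for complete equidimensional rings at the top cohomological spot. Your plan instead sketches a proof of (the containment half of) that theorem from scratch: for each $\fp\in U$ reduce modulo $Q_\fp(R')$, observe that $IR'$ becomes $\fm$--primary in the complete local domain $R'/\fp$, and use non--vanishing of top local cohomology there to conclude that any annihilator lies in $Q_\fp(R')$. This is essentially the mechanism behind the cited theorem, and you correctly flag the delicate point---controlling how $H^c_{IR'}(R')$ maps onto $H^c_{IR'}(R'/Q_\fp(R'))$ via right exactness of $H^c_{IR'}$---as the place needing care. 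So your route is sound in spirit but longer; the paper's proof is a one--line citation at this stage.
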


\begin{proof}  By what we have seen before Proposition \ref{isomorphism}, there exists an element
 $y \in \fm$ which is not contained in any minimal prime of $I$.
From the short exact sequence $ \ 0 \rightarrow R/(0:_R y)
\stackrel{y}{\rightarrow} R \rightarrow R' \rightarrow 0$ one can
get the long exact sequence
$$ \ H^c_{I}(R/(0:_R y)) \rightarrow H^c_{I}(R)\rightarrow H^c_{I}(R') \rightarrow H^{c+1}_{I}(R/(0:_R y)),$$
where $H^{c}_{I}(R/(0:_R y))\cong H^{c}_{I}(R)\otimes_R R/(0:_R y) $
is zero because of our assumptions. Then one has  $$\Ann_R H^{c}_{I}(R)\subseteq \Ann_R
H^{c}_{I}(R') \subseteq \Ann_{R'} H^{c}_{I}(R')=Q_{I R'}(R').$$
To this end note that the rightmost equality follows from \cite[Theorem
4.2(a)]{EghbaliSchenzel2012}.
\end{proof}
Let $(R,\fm)$ be a commutative Noetherian local ring; for an $R$-module $M$, there is a canonical injection
\begin{equation}\label{induced}
\Hom_R(M,M) \rightarrow \Hom_R (M,D(D(M)))
\end{equation}
induced by applying $\Hom_R(M,-)$ to the evaluation map $M \rightarrow D(D(M)),$ which is known to be an injection \cite[10.2.2 (i)]{BroSha}. We claim that the target module in \eqref{induced} is isomorphic to the $R$-module $\Hom_R (D(M),D(M));$ indeed, we know, thanks to \cite[Example 3.6]{Hunekelectureslocalcohomology} that, given another $R$--module $N,$ for any integer $i\geq 0$ there is an $R$--module isomorphism
\[
D(\operatorname{Tor}_i (M,N))\cong\Ext_R^i (M,D(N)).
\]
Taking $N=D(M)$ and $i=0$ one gets the isomorphism
\[
D(M\otimes_R D(M))\cong\Hom_R (M,DD(M)).
\]
Finally, since by Hom--tensor adjointness one has the canonical isomorphism
\[
D(M\otimes_R D(M))\cong\Hom_R (D(M),D(M)),
\]
one finally ends up with the isomorphism
\[
\Hom_R (M,D(D(M)))\cong\Hom_R (D(M),D(M)).
\]
Summing up, we always have the injection 
\begin{equation}\label{hom2}
\Hom_R(M,M) \rightarrow \Hom_R (D(M),D(M)).
\end{equation}
 In the following result, we give conditions to show that $$\Hom_R(H^c_{I}(R),H^c_{I}(R)) \cong 
\Hom_R(D(H^c_{I}(R)),D(H^c_{I}(R))).$$
Before stating our main result in this section, let us recall that 
if $R$ is a complete local ring, then 
$D(H^h_{I}(D(H^h_{I}(R)))) \cong R$, where $H^i_{I}(R)=0$ for all $i \neq h$ \cite[Theorem 2.5 and Corollary 2.6]{Kh07}. 

\begin{teo}\label{Main1} 
Let $(R,\fm)$ be a complete Cohen-Macaulay local ring of dimension $n,$ let $I$ be an ideal of $R$ with $\dim R/I :=t \geq 2,$ and set $c=n-(t-1)$ as the cohomological dimension of $I.$ Suppose that $\Ann_R H^c_{I}(R) \cap \Sigma(I)$ is a non empty subset of $R$. Then one has isomorphisms of $R$--modules
\begin{enumerate}[(i)]
\item    $\Hom_R(H^c_{I}(R),H^c_{I}(R)) \cong R/rR$,
\item  $\Hom_R(D(H^c_{I}(R)),D(H^c_{I}(R))) \cong R/rR$,
\item  $D(H^c_{I}(D(H^c_{I}(R)))) \cong R/rR$,

\end{enumerate}

where $0\neq r \in \Ann_R H^c_{I}(R) \cap \Sigma(I)$. In particular, the maps $\mu_{H^c_{I}(R)}$ and $\mu_{D(H^c_{I}(R))}$ are surjective.

\end{teo}

\begin{proof}
By what we have seen in the proof of Proposition \ref{isomorphism}, if $\fb=I+rR$ for some non--zero $r \in \Ann_R H^c_{I}(R) \cap \Sigma(I),$ then one has that
\begin{equation}\label{c}
0 \neq H^c_{I}(R)\cong H^c_{\fb}(R').
\end{equation}
It implies that $c \leq \cd(R',\fb)$, where $R':=R/rR$ and (here is where we are using the Cohen--Macaulay assumption) $\Ht \fb \geq n-(t-1)=c$. On the other hand, by the Independence Theorem \cite[4.2.1]{BroSha}, one has the isomorphism of $R'$--modules 
\[
H^i_{\fb}(R') \cong H^i_{IR'}(R') \cong H^i_{I}(R')
\]
and from the surjection $R \rightarrow R'$ we deduce that $\cd(R',\fb)=\cd(R',I) \leq \cd(R,I)$, where the inequality follows by \cite[Theorem 2.2]{DNT}. Summing up, we have $\cd(R',\fb)=c=\Ht \fb$. That is, $H^i_{\fb}(R')=0$ for all $i \neq c$. 

Next, from \eqref{c}, and applying Matlis duality we get the isomorphism $D(H^c_{I}(R))\cong D(H^c_{\fb}(R'))$. In the light of the above descriptions we have $R$--module isomorphisms
\[
\Hom_R(H^c_{I}(R),H^c_{I}(R))\cong\Hom_R(H^c_{\fb}(R'),H^c_{\fb}(R'))\cong\Hom_{R'}(H^c_{\fb}(R'),H^c_{\fb}(R')),
\]
and 
\[
\Hom_R(D(H^c_{I}(R)),D(H^c_{I}(R))) \cong\Hom_{R'}(D(H^c_{\fb}(R')),D(H^c_{\fb}(R'))).
\]
Moreover, as $R'$ is a complete ring of dimension $n-1$, it follows from \cite[Theorems 2.2 and 2.6]{HellusStuckrad2008new} that the endomorphism ring
\[
\Hom_{R'}(D(H^c_{\fb}(R')),D(H^c_{\fb}(R'))) \cong \Hom_{R'}(H^c_{\fb}(R'),H^c_{\fb}(R'))
\]
is isomorphic to the local ring $R'$. This proves parts (i) and (ii).
 
In order to prove part (iii), we proceed as follows; from \eqref{c} and the fact that $H^c_{I}(M) \cong H^c_{I}(R) \otimes_R M$ and $H^c_{\fb}(M) \cong H^c_{\fb}(R') \otimes_R M$ for any $R$-module $M$, one obtains the following isomorphism
 $$H^c_{I}(D(H^c_{I}(R))) \cong H^c_{\fb}(D(H^c_{\fb}(R'))).$$
 Finally, by applying $D(-)$ and using \cite[Theorem 2.2]{HellusStuckrad2008new} we are done.
\end{proof} 
Recently, the annihilator of local cohomology turns out to be an interesting subject in Commutative Algebra. See for instance  \cite{BoixEghbali2018,BoixEghbali2018correction,DattaSwitalaZhang}. Of particular interest is the annihilator of local cohomology at the spot of the cohomological dimension of an ideal. See \cite[Section 7]{BoixEghbali2018} and \cite{HochsterJeffries2020} for a complete collection of attempts and related questions. An immediate consequence of Theorem \ref{Main1} is the following result, where, in the light of (\ref{hom}), $\Ann_R(M)=\ker \mu_M$. 
\begin{cor}
Let $(R,\fm)$ be a complete Cohen-Macaulay local ring of dimension $n,$ let $I$ be an ideal of $R$ with $\dim R/I :=t \geq 2,$ and set $c=n-(t-1)$ as the cohomological dimension of $I.$ Suppose that $\Ann_R H^c_{I}(R) \cap \Sigma(I)$ is a non empty subset of $R$. Then $\Ann_R(H^c_{I}(R))=rR$, for some element $r \in \Ann_R H^c_{I}(R) \cap \Sigma(I)$.
\end{cor}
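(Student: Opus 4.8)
The plan is to read off the corollary directly from Theorem~\ref{Main1}(i), using only the elementary observation recalled in the paragraph just before the statement: via the map $\mu_M\colon R\to\Hom_R(M,M)$ one has $\Ann_R(M)=\ker\mu_M$. Applying this to $M=H^c_I(R)$, the claim is equivalent to saying that $\ker\mu_{H^c_I(R)}$ is the principal ideal $rR$ for the element $r$ produced in Theorem~\ref{Main1}.

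First I would recall the hypotheses are exactly those of Theorem~\ref{Main1}, so we may fix a non--zero $r\in\Ann_R H^c_I(R)\cap\Sigma(I)$ and invoke part~(i) of that theorem to obtain an $R$--module isomorphism $\Hom_R(H^c_I(R),H^c_I(R))\cong R/rR$. Next I would identify \emph{which} isomorphism this is: tracing through the proof of Theorem~\ref{Main1}, the isomorphism with $R'=R/rR$ is induced by the natural action, i.e.\ it is compatible with $\mu_{H^c_I(R)}$ in the sense that the composite $R\xrightarrow{\mu_{H^c_I(R)}}\Hom_R(H^c_I(R),H^c_I(R))\xrightarrow{\sim}R/rR$ is the canonical surjection $R\twoheadrightarrow R/rR$. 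Since $r\in\Ann_R H^c_I(R)$, the map $\mu_{H^c_I(R)}$ factors through $R/rR$, and the induced map $R/rR\to\Hom_R(H^c_I(R),H^c_I(R))$ is then an isomorphism because both sides are cyclic $R$--modules of the ``same size'' — concretely, it is surjective since $\mu$ is part of the ``In particular'' conclusion of Theorem~\ref{Main1}, and it is injective because a surjection of a module onto an isomorphic copy of itself, here between two Noetherian cyclic modules with the same annihilator, is forced to be an isomorphism. Hence $\ker\mu_{H^c_I(R)}=rR$, which is precisely $\Ann_R(H^c_I(R))=rR$.

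The one point requiring care — and the place I would be most careful to spell out — is the surjectivity of $\mu_{H^c_I(R)}$, since a priori Theorem~\ref{Main1} only asserts an \emph{abstract} isomorphism $\Hom_R(H^c_I(R),H^c_I(R))\cong R/rR$, not that this isomorphism is the one induced by $\mu$. I expect this to be the main (indeed only) obstacle, and it is resolved exactly as the theorem's own ``In particular'' clause does it: the abstract isomorphism shows $\Hom_R(H^c_I(R),H^c_I(R))$ is a cyclic $R$--module, so it is generated by some endomorphism $\varphi$; but every endomorphism of $H^c_I(R)$ lies in the image of $\mu$ once we know the endomorphism ring is cyclic and $\mu$ hits a generator, which follows because the identity endomorphism $\mathrm{id}=\mu_{H^c_I(R)}(1)$ and a cyclic ring is generated by its identity. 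Thus $\mu_{H^c_I(R)}$ is surjective with kernel $rR$, giving $R/rR\cong\Hom_R(H^c_I(R),H^c_I(R))$ via $\mu$ itself, and therefore $\Ann_R(H^c_I(R))=\ker\mu_{H^c_I(R)}=rR$, as asserted.
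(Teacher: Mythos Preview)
Your proof is correct and follows exactly the approach the paper intends: the paper's own argument is the single sentence ``in the light of \eqref{hom}, $\Ann_R(M)=\ker\mu_M$'', leaving the reader to combine this with Theorem~\ref{Main1}(i) and the surjectivity of $\mu_{H^c_I(R)}$ from the ``In particular'' clause. You have simply spelled out the details that the paper omits, namely that $r\in\ker\mu$ forces a factorization $R/rR\twoheadrightarrow\Hom_R(H^c_I(R),H^c_I(R))\cong R/rR$, and that a surjective endomorphism of a Noetherian module is an isomorphism. One cosmetic point: your phrase ``two Noetherian cyclic modules with the same annihilator'' is slightly circular, since the annihilator is what you are computing; but your argument does not actually rely on that clause, only on Noetherianity, so the logic is sound. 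Your third paragraph (that a cyclic endomorphism ring is generated by the identity, hence $\mu$ is surjective) is a nice independent justification of the ``In particular'' clause, though strictly speaking you may simply quote it.
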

\section{Lyubeznik tables}\label{section of Lyubeznik tables}

In this section, we consider mainly an invariant of local rings known as Lyubeznik numbers. Let $(R,\fm, k)$ be a local ring of dimension $n$ which admits a surjection from a regular local ring
$(S, \fn, k)$ containing a field. Let $I$ be the kernel of the surjection. Lyubeznik \cite{Lyubeznik1993Dmod} proved that the Bass numbers $$\lambda_{i,j}(R): = \dim_k \Ext_S^i(k,H_I^{\dim S-j} (S))=\dim_k \Hom_S(k,H^i_{\fm}(H_I^{\dim S-j} (S))),$$ known as Lyubeznik numbers of $R$, depend only on $R, i$ and $j$, but neither on $S$ nor on the surjection $S \rightarrow R$. Note that, in the light of \cite{HuSha,Lyubeznik1993Dmod}, these Bass numbers are all finite.
One can collect these integers in the so--called Lyubeznik table as follows:
\[
\Lambda (R)=\begin{pmatrix}
\lambda_{0,0}& \lambda_{0,1}& \lambda_{0,2}& \ldots& \ldots& \lambda_{0,n}\\
0& \lambda_{1,1}& \lambda_{1,2}& \ldots& \ldots& \lambda_{1,n}\\
0& 0& \lambda_{2,2}& \lambda_{2,3}& \ldots& \lambda_{2,n}\\
\vdots& \vdots& 0& \ddots& \ddots& \vdots\\
\vdots& \vdots& \vdots& \ddots& \ddots& \lambda_{n-1,n}\\
0& 0& 0& \ldots& 0& \lambda_{n,n}
\end{pmatrix}.
\]
where  $\lambda_{i,j} := \lambda_{i,j}(R)$ for every $0 \leq i, j \leq n$.     
\begin{rk}
Let $(R,\fm,k)$ be a complete regular local ring of dimension $n$ containing a field, let $I$ be an ideal of $R$ with $\dim R/I :=t \geq 2,$ and set $c=n-(t-1)$ as the cohomological dimension of $I.$ Suppose that $\Ann_R H^c_{I}(R) \cap \Sigma(I)$ is a non empty subset of $R$. Put $R'=R/rR$ and $\fb=I+rR$. As $R$ is complete and contains a field, it contains a coefficient field, which by abuse of notation we also denote by $k.$ By the way of choosing $r$ in Proposition \ref{isomorphism}, $R'$ is a complete regular local ring containing a field too. Moreover, in the proof of Theorem \ref{Main1} we have seen that $\cd(R',\fb)=c=\Ht \fb$. Accordingly, for all integer $i$ 
$$\dim_k \Ext_R^i(k,H_I^{c} (R))=\dim_k \Ext_{R'}^i(k,H_{\fb}^{c} (R'))=\lambda_{i,d}(R'/\fb)$$
is finite, where $d=\dim R'/\fb$.
\end{rk}

In the next result, we recover \cite[page 1632]{Walther2001}.

\begin{teo}  Let $I$ be a one dimensional ideal of a $n$-dimensional regular local ring $(R,\fm)$ containing a field.
Then the Lyubeznik table of $R/I$ is trivial; in other words,
\[
\Lambda(R/I)=\begin{pmatrix}
0& 0\\ 0& 1
\end{pmatrix}.
\]
\end{teo}

\begin{proof} Since Lyubenznik numbers are stable under completion,
without loss of generality we may assume that $R$ is complete. By
using the Hartshorne-Lichtenbaum Vanishing Theorem and Corollary \ref{B} one has
$\lambda_{0,0}(R/I)=0$, $\lambda_{0,1}(R/I)=0$ and $\lambda_{1,1}(R/I)=1$.
\end{proof}

It is noteworthy to mention that with the assumptions of Theorem
\ref{injectiveenvelop copies} the Lyubeznik table of $R/I$ is as
follows:
\[
\Lambda(R)=\begin{pmatrix}
0& \lambda_{0,1}& 0\\
0& 0& 0\\
0& 0& \lambda_{0,1}+1
\end{pmatrix}.
\]
(see also \cite[Proposition 2.2]{Walther2001}).

We want to illustrate the above Lyubeznik table with the following example, borrowed from \cite[Example 55]{AlvarezMontanerMonica}.

\begin{ex}
Let $\mathbb{K}$ a field, and consider $I=(x_1,x_3)\cap (x_2,x_4)$ inside $R=\mathbb{K}[x_1,x_2,x_3,x_4]$ is the defining ideal of the union of the two skew lines in $\mathbb{P}^3_{\mathbb{K}}.$ Using the Mayer-Vietoris exact sequence, one can check that
\[
H_I^2 (R)\cong H_{(x_1,x_3)}^2 (R)\oplus H_{(x_2,x_4)}^2 (R),\ H_I^3 (R)\cong E^{2-1},
\]
where $E$ denotes a copy of the $\hbox{}^*$ injective hull of $\mathbb{K}.$ Therefore, in this case, one obtains the following Lyubeznik table:
\[
\begin{pmatrix}
0& 1& 0\\
0& 0& 0\\
0& 0& 2
\end{pmatrix}.
\]
\end{ex}

\subsection{ Certain partially sequentially Cohen-Macaulay rings}

Hereafter, $R$ will denote a commutative Noetherian regular ring containing a field, which is either local of dimension $n$, or $R=\mathbb{K}[x_1,\ldots ,x_n]$, where $\mathbb{K}$ is a field, and $R$ is standardly $\Z$-graded; moreover, $M$ will always denote a finitely generated $R$-module and, given $0\leq j\leq\dim (M)$, $K^j (M):=\Ext_R^{n-j} (M,R).$ 

\begin{df}\label{sCM: definition}
Let $\mathcal{M}_{-1}= 0$ and, given a non-negative
integer $k$, we denote by $\mathcal{M}_k$ the maximum submodule of $M$ with dimension less than or equal to $k$. We call
$\{\mathcal{M}_k\}_{k\geq -1}$ the dimension filtration of $M$. The module $M$ is said to be sequentially Cohen-Macaulay (sCM), if $\mathcal{M}_k/\mathcal{M}_{k-1}$ is either zero or a $k$-dimensional Cohen-Macaulay module for all $k \geq 0$.
\end{df} 
We refer the interested reader to see \cite{CavigliaDeStefaniSbarraStrazzanti2022} for motivations to study on the concept of sequentially Cohen-Macaulay modules. In \cite{SbarraStrazzanti17}, the authors introduced the following notion:
\begin{df}\label{partially sCM: definition}
Let $i\geq 0$ be an integer, and let $\{\mathcal{M}_k\}_{k\geq -1}$ be the dimension filtration of $M$; it is said that $M$ is \emph{$i$-partially sequentially Cohen-Macaulay} (from now on, $i$-sCM for the sake of brevity) provided, for any $i\leq k\leq\dim (M)$, $\mathcal{M}_k/\mathcal{M}_{k-1}$ is either zero or a $k$-dimensional Cohen-Macaulay module.
\end{df}

As pointed out in \cite{SbarraStrazzanti17}, sequentially Cohen-Macaulay modules are exactly the $0$-sCM modules; it is also worth noting that, if $M$ is canonically Cohen-Macaulay module (see \cite[Definition 3.1]{Schenzel2004}), then this is equivalent to say that $M$ is a $\dim (M)$-sCM module. Notice that $i$-sCMness does not guarantee to be $(i-1)$-sCM.

Indeed, to illustrate our last remark we want to give an example of an $i$--sequentially Cohen--Macaulay module borrowed from \cite[Example 7 (2)]{CavigliaDeStefaniSbarraStrazzanti2022}.

\begin{ex}\label{one example of 3 seq. Cohen--Macaulay}
Given $\mathbb{K}$ a field, consider the ring
\[
R/I=\frac{\mathbb{K}[x_1,x_2,x_3,x_4,x_5]}{(x_1)\cap (x_2,x_3)\cap (x_1^2,x_4,x_5)}.
\]
In this case, one has the filtration
\[
0=\mathcal{M}_{-1}=\mathcal{M}_0=\mathcal{M}_1\subset\mathcal{M}_2=\frac{(x_1)\cap (x_2,x_3)}{I}\subset\mathcal{M}_3=\frac{(x_1)}{I}\subset\mathcal{M}_4=R/I.
\]
One can check that $\mathcal{M}_4/\mathcal{M}_3$ and $\mathcal{M}_3/\mathcal{M}_2$ are Cohen--Macaulay modules of dimensions $4$ and $3$ respectively, but $\mathcal{M}_2/\mathcal{M}_1$ is not Cohen--Macaulay. Hence $R/I$ is a $3$--sequentially Cohen--Macaulay ring which is not $2$--sequentially Cohen--Macaulay.
\end{ex}

In \cite[Theorem 3.2 and Remark 3.3]{AlvarezMontaner2015}, \`Alvarez Montaner shows that, under certain assumptions, sequentially Cohen-Macaulay rings have a trivial Lyubeznik table; our main goal in this note is to use his approach to partially compute the Lyubeznik table of $i$-sCM rings.

To do so, we need to use the following results; the first one is a natural common generalization of \cite[Theorem 5.5]{Schenzel1999} and \cite[page 88]{Stanley1996}. To fix notation, recall that, under the assumptions of this section, given a finitely generated (graded if we are working with the polynomial ring) module $M,$ we denote by $K^j (M)$ its $j$--th deficiency module.

\begin{teo}\label{characterization partially sCM}
Let $k\geq 0$ be an integer. Then, $M$ is $k$-sCM if and only if, for any $k\leq j\leq\dim (M)$, $K^j (M)$ is either zero or Cohen-Macaulay of dimension $j$.
\end{teo}

\begin{proof}
The proof is verbatim the one done by Schenzel in \cite[Theorem 5.5]{Schenzel1999}; following his notation, the only changes in the proof are that one has to take the intervals $k\leq i\leq\dim (M)$ and, when one tackles the spectral sequence, the interval $-\dim (M)\leq p\leq -k.$
\end{proof}

The second one is a technical fact proved in \cite[Lemma 3.1]{AlvarezMontaner2015}.

\begin{lm}\label{vanishing of some Lyubeznik numbers}
Assume that there is a flat endomorphism $\xymatrix@1{R\ar[r]^-{\varphi}& R}$ such that $\varphi (\mathfrak{m})\subseteq\mathfrak{m}$ satisfying, for a given ideal $I\subseteq R$ (homogeneous if $R$ is a polynomial ring as above), that the ideals $\{\varphi^t (I)R\}_{t\geq 0}$ form a descending chain cofinal with $\{I^t\}_{t\geq 0}.$ Given $p,j\in\N$, if $H_{\mathfrak{m}}^p (K^j (R/I))=0,$ then $\mu_p (\mathfrak{m},H_I^{n-i} (R))=0.$
\end{lm}
The last preliminary fact we need was also proved in \cite[Proposition 1.1]{AlvarezMontaner2015}; namely:

\begin{prop}\label{Euler type formula for Lyubeznik numbers}
Lyubeznik numbers satisfy the following Euler characteristic type formula:
\[
\sum_{0\leq p\leq j\leq\dim (R/I)} (-1)^{p-j} \lambda_{p,j} (R/I)=1.
\]
\end{prop}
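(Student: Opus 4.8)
Proof proposal for Proposition \ref{Euler type formula for Lyubeznik numbers} (the Euler characteristic type formula for Lyubeznik numbers).

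The plan is to extract the formula from the known topological/arithmetic rigidity of the Lyubeznik numbers, rather than to compute anything directly. First I would reduce to the complete equicharacteristic case, which is harmless since the Lyubeznik numbers are invariant under completion. Writing $R$ as a quotient of a regular local ring $(S,\fn,k)$ of dimension $N$ with kernel $I$, so that $d:=\dim(R/I)$, I would consider the $k$--modules $H_I^{N-j}(S)$ for $0\le j\le d$, each of which is a holonomic $D$--module (in characteristic zero) or an $F$--module with finitely many associated primes (in characteristic $p$); in either case it has finite Bass numbers $\lambda_{p,j}=\mu_p(\fn,H_I^{N-j}(S))$ at the maximal ideal, and crucially only finitely many nonzero Bass numbers overall, all concentrated at primes containing $\fn$... actually at $\fn$ itself in the sense relevant here.

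The key step is to run the Grothendieck spectral sequence
\[
E_2^{p,q}=H_\fn^p\!\left(H_I^{N-q}(S)\right)\Longrightarrow H_\fn^{p-q+? }(\cdots)
\]
— more precisely the composite--functor spectral sequence relating $H_I^\bullet$, $H_\fn^\bullet$ and $H_\fn^\bullet$, exploiting that $H_\fn^{\,\bullet}$ of anything is concentrated in a single cohomological degree when applied to $S$ and that the abutment is $H_\fn^\bullet(S)$, which is just a single copy of the injective hull $E$ in degree $N$. Taking $\Hom_S(k,-)$ (equivalently, counting the socle dimensions, i.e. the $\lambda_{p,j}$) and using that Euler characteristics are preserved by spectral sequences, the alternating sum $\sum_{p,j}(-1)^{p-j}\lambda_{p,j}(R/I)$ collapses to the corresponding alternating count for the abutment, namely $\dim_k\Hom_S(k,H_\fn^N(S))=\dim_k\Hom_S(k,E)=1$. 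The indexing $p-j$ (rather than $p+j$) is exactly what is needed so that the differentials of the spectral sequence, which shift $(p,q)$ by $(r,r-1)$ hence preserve $p-q$ up to a global sign, cancel in pairs.

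The main obstacle I anticipate is bookkeeping the degrees correctly: one must be careful that the spectral sequence really does have all of its $E_2$--entries with finite-dimensional socles (this is where the finiteness results of Huneke--Sharp and Lyubeznik enter, \cite{HuSha,Lyubeznik1993Dmod}) and that the total complex whose Euler characteristic we compute genuinely abuts to $H_\fn^\bullet(S)$ and not to something twisted. An alternative, possibly cleaner, route is to invoke the Matlis dual: $D(H_I^{N-j}(S))$ assembles (via the dual of the above spectral sequence) into a complex whose cohomology is $D(H_\fn^N(S))=\widehat{S}$, and then $\lambda_{p,j}$ is the dimension of $\Ext^p_S(k,H_I^{N-j}(S))=\Tor_p^S(k,D(H_I^{N-j}(S)))^\vee$; computing the Euler characteristic of $k\otimes^{\mathbf L}_S(\text{that complex})$ in two ways gives $\chi(k\otimes^{\mathbf L}_S\widehat S)=1$ on one side and $\sum(-1)^{p-j}\lambda_{p,j}$ on the other. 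Since this is a preliminary fact already established in \cite[Proposition 1.1]{AlvarezMontaner2015}, I would in practice simply cite that reference and record the statement here for the reader's convenience, as the excerpt indicates.
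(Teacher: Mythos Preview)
The paper does not give its own proof of this proposition; it simply records the statement and cites \cite[Proposition 1.1]{AlvarezMontaner2015}, exactly as you propose doing in practice. Your spectral-sequence sketch is in fact the argument used in that reference: one runs the Grothendieck (equivalently, double \v{C}ech) spectral sequence $E_2^{p,q}=H_\fn^p(H_I^q(S))\Rightarrow H_\fn^{p+q}(S)$, whose abutment is a single copy of $E$ in degree $N=\dim S$, and compares Euler characteristics. One small point worth tightening if you were to write it out in full: $\dim_k\Hom_S(k,-)$ is not additive on arbitrary short exact sequences of Artinian modules, so to make the Euler-characteristic step rigorous one uses that the \v{C}ech construction places the whole spectral sequence in the category of holonomic $D$--modules (respectively $F$--finite $F$--modules), where every term supported at $\fn$ is a finite direct sum of copies of $E$ and the ``number of copies'' is additive by Matlis duality.
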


In this way, our main result in this subsection is the following result, which recovers and extends \cite[Proposition 4.1]{AlvarezMontaner2015}; namely:

\begin{teo}\label{Lyubeznik table of partially sCM}
Let $i\geq 0$ be an integer, and suppose that $R/I$ is $i$-sCM (in the local case, it is enough to assume that the completion of $R/I$ is $i$-sCM). Moreover, we assume that there is a flat endomorphism $\xymatrix@1{R\ar[r]^-{\varphi}& R}$ such that $\varphi (\mathfrak{m})\subseteq\mathfrak{m}$ satisfying, for a given ideal $I\subseteq R$ (homogeneous if $R$ is a polynomial ring as above), that the ideals $\{\varphi^t (I)R\}_{t\geq 0}$ form a descending chain cofinal with $\{I^t\}_{t\geq 0}$. Then, one has that $\lambda_{p,j} (A/I)=0$ for any $p\neq j$, where $i\leq j\leq\dim (A/I).$
\end{teo}

\begin{proof}
Let $i\leq j\leq\dim (R/I);$ by Theorem \ref{characterization partially sCM}, one has that the corresponding deficiency module $K^j (R/I)$ is either zero or Cohen--Macaulay of dimension $j.$ This implies, thanks to Lemma \ref{vanishing of some Lyubeznik numbers}, that $\lambda_{p,j} (R/I)=0$ for any $p\neq j,$ as claimed.
\end{proof}

\begin{rk}\label{shape of the Lyubeznik table for i-scm}
Under the assumptions of Theorem \ref{Lyubeznik table of partially sCM}, we know that the Lyubeznik table of an $i$--sCM ring has the following shape:
\[
\begin{pmatrix}
\lambda_{0,0}& \lambda_{0,1}&\ldots& \ldots& \ldots& \lambda_{0,d}\\
0& \lambda_{1,1}& \ldots& \ldots& \ldots& \lambda_{1,d}\\
\vdots& \hbox{}& \hbox{} & \hbox{}& \hbox{}& \hbox{}\\
0& \ldots& \lambda{i,i}& 0& \ldots& 0\\
\vdots& \hbox{}& \hbox{}& \hbox{} & \hbox{}& 0\\
0& \ldots& \ldots& 0 & 0& \lambda_{d,d}
\end{pmatrix}
\]
Let us express this shape in a different way; given $1\leq k\leq d+1,$ denote by $\mathbf{v}_K$ the column vector with the following $d+1$ entries in $R.$ Indeed, if $1\leq l\leq d+1,$ its $l$--th entry is
\[
(\mathbf{v}_k)_l=\begin{cases} 0\text{, if }l\leq k,\\
\lambda_{k-1,l-1}\text{, if }l\geq k+1.\end{cases}
\]
In this way, denoting by $\mathbf{e}_k$ the $k$--th canonical basis vector of $R^{d+1}$ one has that, if our $R/I$ is $i$--partially sequentially Cohen--Macaulay, then its Lyubeznik table is
\[
\Lambda (R/I)=\operatorname{diag}(\lambda_{0,0},\ldots,\lambda_{d,d})+\sum_{k=1}^i \mathbf{e}_k\cdot\mathbf{v}_k^T,
\]
where $\operatorname{diag}(\lambda_{0,0},\ldots,\lambda_{d,d})$ denotes the square matrix of size $d+1$ with $\lambda_{0,0},\ldots,\lambda_{d,d}$ in the diagonal and zeros outside, $\cdot$ denotes matrix product, and $(-)^T$ denotes transposition of matrices.

We also want to discuss some small values of $i.$

\begin{enumerate}[(i)]

\item For $i=0,$ the ring is sequentially Cohen--Macaulay and therefore has a trivial Lyubeznki table by \cite[Proposition 4.1]{AlvarezMontaner2015}.

\item For $i=1,$ Proposition \ref{Euler type formula for Lyubeznik numbers} combined with Theorem \ref{Lyubeznik table of partially sCM} gives the following shape of the Lyubeznik table:
\[
\begin{pmatrix}
\lambda_{0,0}& \lambda_{0,1}&\ldots& \ldots& \ldots& \lambda_{0,d}\\
0& \lambda_{1,1}& 0& \ldots& \ldots& 0\\
\vdots& \hbox{}& \hbox{}& \hbox{}& \hbox{}& \hbox{}\\
0& \ldots& \lambda_{i,i}& 0&  \ldots& 0\\
\vdots& \hbox{}& \hbox{}& \hbox{}& \hbox{}& 0\\
0& \ldots& \ldots& \ldots& 0& \lambda_{d,d}
\end{pmatrix}
\]

\end{enumerate}
\end{rk}

\section{Open questions and final remarks}
We wanto to end up with the following question that seems so natural for us keeping in mind the results obtained along this manuscript.

\begin{quo}
Let $(R,\mathfrak{m})$ be a commutative Noetherian local ring, let $I\subseteq R$ be an ideal such that $\dim (R/I)\geq 2,$ and set $c$ as the cohomological dimension of $I.$ Is it true that there exists an ideal $J\subseteq R$ such that:

\begin{enumerate}[(i)]

\item There is an $R$--module isomorphism
\[
H_I^c (R)\cong H_{I+J}^c (R/J).
\]

\item If $\mu (J/I)$ denotes the number of a minimal generating set of $J/I$ as $R/I$--module, then
\[
\dim (R/J)=\dim (R/I)-\mu (J/I).
\]

\item $I+J$ is a cohomologically complete intersection ideal in $R/J.$

\end{enumerate}
\end{quo}

\bibliographystyle{alpha}
\bibliography{AFBoixReferences}

\end{document}